\definecolor{webgreen}{rgb}{0,.5,0}
\definecolor{webbrown}{rgb}{.6,0,0}
\newcommand{\seqnum}[1]{\href{http://oeis.org/#1}{\underline{#1}}}
\newcommand*\bigcdot{\mathpalette\bigcdot@{.5}}
\newcommand*\bigcdot@[2]{\mathbin{\vcenter{\hbox{\scalebox{#2}{$\m@th#1\bullet$}}}}}
\begin{document}
\theoremstyle{plain}
\newtheorem{theorem}{Theorem}
\newtheorem{corollary}[theorem]{Corollary}
\newtheorem{lemma}[theorem]{Lemma}
\newtheorem{proposition}[theorem]{Proposition}

\theoremstyle{definition}
\newtheorem{definition}[theorem]{Definition}
\newtheorem{example}[theorem]{Example}
\newtheorem{conjecture}[theorem]{Conjecture}

\theoremstyle{remark}
\newtheorem{remark}[theorem]{Remark}
\newtheorem*{remark-non}{Remark}

\begin{center}
\vskip 1cm{\Large\bf Fibonacci and Lucas Riordan Arrays and \\
\vskip .05in
Construction of Pseudo-Involutions }
\\
Candice Marshall and Asamoah Nkwanta
\\
Morgan State University, Baltimore, MD 21251
\end{center}

\vskip .2 in

\begin{abstract}
Riordan arrays, denoted by pairs of generating functions $(g(z), f(z))$, are infinite lower-triangular matrices that are used as combinatorial tools. 
In this paper, we present Riordan and stochastic Riordan arrays that have connections to the Fibonacci and modified Lucas numbers.  Then, we present some pseudo-involutions in the Riordan group that are based on constructions starting with a certain generating function $g(z)$. We also present a theorem that shows how to construct pseudo-involutions in the Riordan group starting with a certain generating function $f(z)$ whose additive inverse has compositional order $2$. The theorem is then used to construct more pseudo-involutions in the Riordan group where some arrays have connections to the Fibonacci  and modified Lucas numbers. A MATLAB algorithm for constructing the pseudo-involutions is also given.
\end{abstract}

\section{Introduction}

In this paper, we present examples of Riordan and stochastic Riordan arrays that have connections to the Fibonacci \seqnum{A000045} \cite{sloane} and modified Lucas numbers \seqnum{A000204} \cite{sloane}.  
In particular, we present some pseudo-involutions in the Riordan group; the first array that is a pseudo-involution includes the Fibonacci numbers in the 
first column,  the second has the modified Lucas numbers in the first column, and the third contains the convolved Fibonacci numbers \seqnum{A001629} \cite{sloane} in the first column. See OEIS \cite{sloane} for more information on the convolved Fibonacci numbers. The constructions start with a certain generating function $g(z)$. We also present a theorem for constructing 
Riordan group pseudo-involutions starting with a certain generating function $f(z)$ whose additive inverse has compositional order $2$.  The theorem is then used to construct more pseudo-involutions in the Riordan group. Other constructions of pseudo-involutions in the Riordan group can be found in \cite{barry2, luzon, marshall1}.

\section{Preliminaries}

We begin with presenting some preliminary information. For those readers familiar with Riordan arrays, you may skip this section and start with Section 3. 

The Fibonacci numbers are given by the following recurrence relation, $\mathcal{F}_0 = \mathcal{F}_1 = 1$ and $\mathcal{F}_n = \mathcal{F}_{n-1} + \mathcal{F}_{n-2}$ \cite{rob}. The first few Fibonacci numbers \seqnum{A000045} \cite{sloane} are 
1, 1, 2, 3, 5, 8, 13, ... The $n^{th}$ Fibonacci number is given by the following expression
\[\mathcal{F}_n = \tfrac{ \left (\tfrac{1+\sqrt{5}}{2} \right )^{n+1} - \left ( \tfrac{1-\sqrt{5}}{2} \right )^{n+1}}{\sqrt{5}}. \] The generating function of the Fibonacci numbers is 
\[ \mathcal{F}(z) = \tfrac{1}{1-z-z^2} = \sum_{n \geq 0} \mathcal{F}_nz^n.\] In this paper, we also use the Lucas numbers, sequence \seqnum{A000032} \cite{sloane}. However, we use a modified version, denoted by  $\mathcal{L}_n$, because the Lucas numbers begin with a $2$. We prefer that the leading (constant) term be $1$. 
The corresponding generating function of the modified Lucas numbers \seqnum{A000204} \cite{sloane} is 
\[ \mathcal{L}(z) = \tfrac{1+z^2}{1-z-z^2} = \sum_{n \geq 0} \mathcal{L}_nz^n.\]

We now introduce the definition of Riordan arrays. The arrays are known as Riordan matrices.

\begin{definition} \cite{shap}
An infinite matrix $L=(l_{n,k})_{n,k\in \mathbb{N}^{\ast }}$ with entries in 
$\mathbb{C}$ is called a Riordan matrix if the $k^{th}$ column satisfies%
\begin{equation*}
\sum_{n\geq 0}l_{n,k}\ z^{n}=g(z)f(z)^{k}
\end{equation*}%
where $g(z)=g_{0}+g_{1}z+g_{2}z^{2}+\cdots $ and $f(z)=f_{1}z+f_{2}z^{2}+%
\cdots $ belong to the ring of formal power series $\mathbb{C}[[z]],$ and $%
f_{1}\neq 0$ and $g_{0}\neq 0.$
\end{definition}

A Riordan matrix, denoted by $L$, is usually written in pair form as $\left(
g(z),f(z)\right) $ or $(g,f)$.

\begin{example}A typical example of a Riordan matrix is the Pascal matrix. In this case, $g(z) = \frac{1}{1-z} $ and $f(z) = \frac{z}{1-z}$. In pair form notation, $P = \left (\frac{1}{1-z},\frac{z}{1-z} \right )$ where the entries of the Pascal Riordan matrix are Pascal's triangle \seqnum{A007318} \cite{sloane} written in infinite lower-triangular form. \end{example}

The following theorem is called the Fundamental Theorem of the Riordan
Group. It leads to the next theorem by applying the fundamental theorem to
an arbitrary Riordan matrix $N$, one column of $N$ at a time.

\begin{theorem}
\cite{shap} If $L=(l_{n,k})_{n,k\in \mathbb{N}^{\ast }}=\left( g(z),f(z)\right) $
is a Riordan matrix and $h(z)$ is the generating function of the sequence
associated with the entries of the column vector \\ $h=(h_{k})_{k\in \mathbb{N}%
^{\ast }},$ then the product of $L$ and $h(z)$, defined by%
\begin{equation*}
L\otimes h(z)=g(z)h(f(z)),
\end{equation*}%
is the generating function of the sequence associated with the entries of
the column vector $\left( \sum_{k=0}^{n}l_{n,k}h_{k}\right) _{n\in \mathbb{N}%
^{\ast }}.$
\end{theorem}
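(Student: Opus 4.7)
The plan is to verify the identity by computing the right-hand side $g(z)h(f(z))$ as a formal power series and matching its $z^n$ coefficient against $\sum_{k=0}^n l_{n,k}h_k$. This amounts to reinterpreting matrix-vector multiplication in the language of generating functions, and the key ingredient is the defining property that the $k$-th column of $L$ has generating function $g(z)f(z)^k$.

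First, I would substitute the series for $h$ to get $h(f(z)) = \sum_{k \ge 0} h_k f(z)^k$, then multiply by $g(z)$ to obtain
$$g(z)h(f(z)) \;=\; \sum_{k\ge 0} h_k\, g(z)f(z)^k.$$
Next, I would replace each $g(z)f(z)^k$ by its series $\sum_{n\ge 0} l_{n,k}z^n$ and interchange the order of summation, yielding
$$\sum_{n\ge 0}\left(\sum_{k\ge 0} l_{n,k}h_k\right) z^n.$$
Finally, since $L$ is lower-triangular we have $l_{n,k}=0$ for $k>n$, so the inner sum truncates at $k=n$, giving the generating function of $\left(\sum_{k=0}^n l_{n,k}h_k\right)_{n\in\mathbb{N}^{\ast}}$, exactly as claimed.

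The only step requiring any care is justifying the interchange of summation in $\mathbb{C}[[z]]$. Since $f(z)$ has no constant term, $f(z)^k$ has valuation at least $k$, so for each fixed $n$ only the terms with $k \le n$ can contribute to the coefficient of $z^n$. This makes the double sum a well-defined formal power series and legitimizes the rearrangement. Beyond this routine $(z)$-adic convergence check, I do not anticipate any obstacle, since the theorem is essentially a direct unwinding of the Riordan matrix definition together with the observation that polynomial composition with a series of positive valuation is well-defined.
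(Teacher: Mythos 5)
Your proof is correct: it is the standard coefficient-by-coefficient unwinding of the Riordan definition, with the interchange of summation properly justified by the fact that $f(z)^k$ has valuation at least $k$ (so $l_{n,k}=0$ for $k>n$ and each coefficient of the double sum is a finite sum). The paper itself states this theorem without proof, citing Shapiro et al., and your argument is precisely the one given in that source, so nothing further is needed.
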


Let us denote by $L * N$, or by simple juxtaposition $LN$, the row-by column
product of two Riordan matrices.

\begin{theorem}
\cite{shap} If 
\begin{equation*}
L=(l_{n,k})_{n,k\in \mathbb{N}^{\ast }}=\left( g(z),f(z)\right)
\end{equation*}%
and 
\begin{equation*}
N=(\nu _{n,k})_{n,k\in \mathbb{N}^{\ast }}=\left( h(z),l(z)\right)
\end{equation*}%
are Riordan matrices, then

\begin{align*}
L\ast N=\left( \sum_{j=0}^{n}l_{n,j}\nu _{j,k}\right) _{n,k\in \mathbb{N}
^{\ast }}& =\left( g(z),f(z)\right) \ast \left( h(z),l(z)\right) \\
& =\left( g(z)h(f(z)),l(f(z))\right) ,
\end{align*}%
and the set $(\mathcal{R},\ast )$ of all Riordan matrices is a group under
the operation of matrix multiplication.
\end{theorem}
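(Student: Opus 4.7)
The plan is to establish the product formula first, and then deduce the group axioms from it.

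For the product formula, I would apply the Fundamental Theorem of the Riordan Group (the immediately preceding theorem) column-by-column to $N$. By definition of a Riordan matrix, the $k^{th}$ column of $N$ has generating function $h(z)l(z)^k$. Viewing this column as a sequence vector and applying the Fundamental Theorem with $L=(g,f)$, I obtain the generating function $g(z)\,h(f(z))\,l(f(z))^k$ for the $k^{th}$ column of $L\ast N$. This is exactly the $k^{th}$ column of the Riordan array $(g(z)h(f(z)),l(f(z)))$, yielding the formula. A brief check confirms that the result is a genuine Riordan matrix: the constant term of $g(z)h(f(z))$ equals $g_{0}h_{0}\neq 0$, and $l(f(z))$ has lowest-order coefficient $l_{1}f_{1}\neq 0$, so closure is established.

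For the group structure, I would verify the four axioms in turn. Closure is immediate from the product formula. Associativity is inherited from ordinary matrix multiplication — each entry of a product of lower-triangular Riordan matrices is a finite sum, so associativity is well-posed — but it can also be checked directly by expanding $(L\ast M)\ast N$ and $L\ast(M\ast N)$ using the formula and noting that both yield the pair with first coordinate $g_{L}(z)\,g_{M}(f_{L}(z))\,g_{N}(f_{M}(f_{L}(z)))$ and second coordinate $f_{N}(f_{M}(f_{L}(z)))$. The identity is $I=(1,z)$: substitution into the product formula on either side returns $(g,f)$. For inverses, since $f(0)=0$ and $f_{1}\neq 0$, the series $f$ admits a unique compositional inverse $\bar{f}\in z\,\mathbb{C}[[z]]$ with $\bar{f}_{1}=1/f_{1}\neq 0$. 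I would then propose $(g,f)^{-1}=(1/g(\bar{f}(z)),\bar{f}(z))$, verify it is a Riordan matrix (its ``$g$''-component has constant term $1/g_{0}\neq 0$), and confirm via the product formula that it is a two-sided inverse of $(g,f)$.

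The main obstacle I expect is the inverse step, where several delicate points must be handled simultaneously: the existence of the compositional inverse $\bar{f}$ in the formal power series ring (which depends on $f_{1}\neq 0$), the legitimacy of the composition $g\circ\bar{f}$ as a formal power series (which depends on $\bar{f}(0)=0$), the invertibility of $g(\bar{f}(z))$ (which depends on $g_{0}\neq 0$), and finally the two-sided nature of the inverse. By contrast, the product formula itself is essentially a direct application of the Fundamental Theorem and drives the rest of the proof.
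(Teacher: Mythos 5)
Your proposal is correct and follows exactly the route the paper indicates: the product formula is obtained by applying the Fundamental Theorem to $N$ one column at a time (the $k$th column of $N$ has generating function $h(z)l(z)^{k}$, transformed to $g(z)h(f(z))l(f(z))^{k}$), and the group axioms are verified with the identity $(1,z)$ and inverse $\left(\tfrac{1}{g(\bar{f}(z))},\bar{f}(z)\right)$, which are the same elements the paper records. No gaps.
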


A Riordan matrix is also known as a proper Riordan array. A Riordan array which is not proper does not belong to the Riordan group. Given that $L = (g(z), f(z))$ is a Riordan matrix, then the identity element is $(1,z)$ and the inverse of $(g(z), f(z))$ is $\left ( \tfrac{1}{g(\bar{f}(z))}, \bar{f}(z) \right )$, where $\bar{f}(z)$ is the compositional inverse of $f(z)$. 
Some well known subgroups of the Riordan group mentioned in this paper are given in \cite{shap}.
 A special property of Riordan matrices is that every element (except those in the first row and first column) can be expressed as a linear combination of the elements in the preceding row starting from the preceding column \cite{rogers}. Also, every element in the first column other than the first element, can be expressed as a linear combination of all the elements of the preceding row \cite{Merlini}. 
 These properties are stated in the following theorem.
\begin{theorem} \cite{Merlini, rogers} Let $L = (l_{n,k})$ be an infinite triangular matrix. Then $L$ is a Riordan matrix if and only if there exists two sequences $A = {a_0, a_1, a_2, ...}$ and $Z = {z_0, z_1, z_2, ...}$ with $a_0 \neq 0, z_0 \neq 0$ such that 
 \[  l_{n+1,k+1} = \sum_{j=0}^\infty a_jl_{n, k+j} \hspace{0.1in} (k,n = 0,1...), \] and
\[   l_{n+1, 0} = \sum_{j=0}^\infty z_jl_{n,j} \hspace{0.1in} (n = 0, 1,...). \]
\end{theorem}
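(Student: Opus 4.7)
The plan is to translate each row recurrence into an equivalent functional equation on the column generating functions $c_k(z) = \sum_{n \geq 0} l_{n,k}\, z^n$, so that the theorem reduces in each direction to a formal power series computation.

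For the forward direction, assume $L = (g(z), f(z))$ is a Riordan matrix, so $c_k(z) = g(z) f(z)^k$. Setting $A(y) = \sum_{j \geq 0} a_j y^j$, I multiply the desired $A$-relation $l_{n+1,k+1} = \sum_j a_j l_{n,k+j}$ by $z^{n+1}$ and sum over $n \geq 0$; lower triangularity kills the $l_{0,k+1}$ term, and the sum becomes $c_{k+1}(z) = z A(f(z))\, c_k(z)$. Cancelling $g(z) f(z)^k$ leaves the single requirement $f(z) = z A(f(z))$, which is satisfied by setting $A(y) := y/\bar{f}(y)$. This is a genuine power series in $y$ because $f_1 \neq 0$, with constant term $a_0 = f_1 \neq 0$. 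A parallel computation reduces the $Z$-relation to $g(z) - g_0 = z\, g(z)\, Z(f(z))$, which is satisfied by defining $Z(y) := (g(\bar{f}(y)) - g_0)/(\bar{f}(y)\, g(\bar{f}(y)))$.

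For the converse, given sequences $A$ and $Z$ with $a_0, z_0 \neq 0$, I define $f(z)$ to be the unique formal power series with $f(0) = 0$ satisfying $f(z) = z A(f(z))$; coefficient comparison forces $f_1 = a_0$ and then determines the higher coefficients uniquely because $a_0 \neq 0$. I then set $g(z) := l_{0,0}/(1 - z\, Z(f(z)))$, which is well defined as the denominator has constant term $1$, and I claim $L$ equals the Riordan matrix $M := (g(z), f(z))$. Direct computation shows that $M$ satisfies the same $A$- and $Z$-recurrences as $L$: expanding $[z^{n+1}]\, g(z) f(z)^{k+1}$ via $f(z)/z = A(f(z))$ recovers the $A$-relation for $M$, and expanding $[z^{n+1}]\, g(z)$ via $(g(z) - g_0)/(z\, g(z)) = Z(f(z))$ recovers the $Z$-relation. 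Since $L$ and $M$ agree on row $0$ (where only the entry $l_{0,0} = g_0$ is nonzero, by lower triangularity and the definition of $g$) and satisfy identical recurrences propagating row $n$ to row $n+1$, induction on $n$ gives $L = M$.

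The main obstacle is careful bookkeeping with the infinite sums: one must verify that $\sum_j a_j l_{n,k+j}$ and $\sum_j z_j l_{n,j}$ are really finite sums (they are, because lower triangularity forces $l_{n,k+j} = 0$ once $k+j > n$), and that the induction in the converse closes correctly by using both recurrences together to build each new row from the previous one. Once these reductions are in place, both directions become routine formal power series manipulations centered on the fundamental identity $f(z) = z A(f(z))$ and its consequence $A(y) = y/\bar{f}(y)$.
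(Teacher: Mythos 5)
Your argument is correct, and in fact the paper offers no proof of this statement at all --- it is quoted from Merlini et al.\ and Rogers, and your generating-function derivation (translating the $A$- and $Z$-recurrences into $f(z)=zA(f(z))$ and $g(z)-g_0=z\,g(z)Z(f(z))$, then running the induction on rows for the converse) is essentially the standard proof found in those references. The only point worth flagging is in the forward direction: your formula $Z(y)=(g(\bar f(y))-g_0)/(\bar f(y)\,g(\bar f(y)))$ gives $z_0=g_1/g_0$, which vanishes whenever $g_1=0$ (e.g., $g(z)=1+z^2$), so the condition $z_0\neq 0$ in the statement cannot actually be guaranteed --- that is a defect of the theorem as transcribed in the paper rather than of your proof, but you should not claim to have verified $z_0\neq 0$ without the extra hypothesis $g_1\neq 0$.
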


These sequences are called the A and Z sequences. We can find the A and Z sequences of a Riordan matrix by calculating its production matrix. The production matrix is the product of the inverse of the Riordan matrix and the Riordan matrix with the first row removed \cite{barry}.

We now introduce a few definitions and propositions about pseudo-involutions that will be useful in the next section. 

\begin{definition}
A stochastic array is an array whose row sums equal one.
\end{definition}

\begin{definition} \cite{jean2}
A stochastic Riordan array is an array which can be written as a pair of generating functions $(g(z), f(z))$ with row sums equal to one. A stochastic Riordan matrix is a proper stochastic Riordan array. It is invertible and therefore belongs to the Riordan group.
\end{definition}

\begin{definition}
\cite{nkwanta} An element $L$ of the Riordan group is called a pseudo involution or
is said to have pseudo-order $2$ if $LM$ or equivalently $ML$ has order $2$
where $M = (1, -z)$. The entries of $M$ consist of alternating $1$ and $-1$ on the main diagonal and $0$'s everywhere else. 
\end{definition}

One interesting property of pseudo-involutions is that one can calculate its inverse very easily by placing negative signs on alternate column and row entries. This is given by the following proposition.
\begin{proposition} \cite{cheon, jean}
If $A$ is a Riordan matrix that is a pseudo-involution, then \\ $A^{-1} = MAM.$ 
\end{proposition}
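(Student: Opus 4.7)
The plan is to unpack the definitions and use only the group structure of $(\mathcal{R}, *)$. First I would verify that $M = (1,-z)$ is itself an involution in the Riordan group: using the composition rule $(g_1, f_1)*(g_2, f_2) = (g_1 \cdot g_2(f_1), f_2(f_1))$ from the Fundamental Theorem, one computes $M * M = (1 \cdot 1, -(-z)) = (1, z)$, which is the identity element. Thus $M^{-1} = M$.

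Next I would translate the hypothesis. By the definition of a pseudo-involution, $A$ being a pseudo-involution means $AM$ has order $2$ in the Riordan group, i.e.
\[
(AM)(AM) = (1,z).
\]
By associativity of the group operation, this reads $A * M * A * M = I$. Left-multiplying both sides by $A^{-1}$ yields $M * A * M = A^{-1}$, which is the desired identity $A^{-1} = MAM$.

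The only content beyond this single line of manipulation is to note that the equivalent formulation $MA$ of order $2$ leads to the same conclusion: $(MA)(MA) = I$ gives $MAMA = I$, and right-multiplying by $A^{-1}$ again yields $MAM = A^{-1}$. So the two forms in the definition are consistent, and the proposition follows. There is no real obstacle here — the statement is essentially a restatement of the order-$2$ condition together with the observation that $M$ is self-inverse — so the ``proof'' is a short chain of equalities in the Riordan group rather than a computation involving the generating functions $g$ and $f$.
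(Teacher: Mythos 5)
Your argument is correct and complete: $M*M=(1,z)$ so $M$ is self-inverse, and cancelling $A$ in $(AM)^2=(1,z)$ (or $(MA)^2=(1,z)$) immediately gives $A^{-1}=MAM$. The paper itself offers no proof of this proposition (it only cites \cite{cheon, jean}), and your group-theoretic manipulation is exactly the standard argument one finds there, so there is nothing to add.
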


\begin{example} Consider the Pascal Riordan matrix given in Example 2. Since P is a pseudo-involution \cite{nkwanta}, then the first few entries of $P^{-1}$ \seqnum{A130595} \cite{sloane} are shown below:

\begin{center}  $\begin{bmatrix} \phantom{-}1\\-1&\phantom{-}1\\\phantom{-}1&-2&\phantom{-}1\\-1&\phantom{-}3&-3&\phantom{-}1\\\phantom{-}1&-4&\phantom{-}6&-4&\phantom{-}1\\-1&\phantom{-}5&-10 &\phantom{-}10&-5&\phantom{-}1\\\phantom{-}1&-6&\phantom{-}15&-20&\phantom{-}15&-6&\phantom{-}1& \end{bmatrix}$. \end{center}

\end{example}

\section{Pseudo-involutions and Subgroups of the Riordan Group}

\begin{proposition} A Riordan matrix $(g(z), F(z))$ is an involution if and only if \[g(z) \cdot g(F(z)) = 1\] and \[F(F(z)) = z. \] \end{proposition}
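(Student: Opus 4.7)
The plan is to reduce the statement directly to the Riordan group multiplication rule already recorded in Theorem 4 (the composition formula for $L \ast N$). By definition, a Riordan matrix $L$ is an involution precisely when $L \ast L$ equals the identity element $(1, z)$ of the Riordan group, so the entire proof amounts to unpacking what $L \ast L = (1,z)$ says when $L = (g(z), F(z))$.

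First I would apply Theorem 4 with $L = N = (g(z), F(z))$, which gives
\begin{equation*}
L \ast L = \bigl(g(z)\, g(F(z)),\, F(F(z))\bigr).
\end{equation*}
Then I would invoke the fact that two Riordan matrices are equal if and only if the two generating functions in each coordinate match, so $L \ast L = (1,z)$ is equivalent to the simultaneous conditions $g(z) \cdot g(F(z)) = 1$ and $F(F(z)) = z$. This yields the forward direction immediately.

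For the reverse direction, I would simply substitute the two hypothesized identities into the formula for $L \ast L$ and read off $L \ast L = (1, z)$, hence $L$ is an involution. Since $(1, z)$ is the neutral element (as noted in the excerpt right after Theorem 4), no further verification is needed.

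There is essentially no obstacle here: the statement is a direct transcription of the group law. The only thing worth being careful about is observing that the second condition $F(F(z)) = z$, together with $F(z) = f_1 z + f_2 z^2 + \cdots$ having $f_1 \neq 0$, forces $f_1 = \pm 1$ (a point that could be remarked on but is not needed for the biconditional itself). The proof is therefore expected to be just a few lines long, consisting of one application of the Riordan multiplication formula and a comparison with the identity $(1, z)$.
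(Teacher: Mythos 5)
Your proposal is correct and follows exactly the paper's own argument: both apply the Riordan multiplication formula to $(g(z),F(z))\ast(g(z),F(z))$ and compare the result coordinatewise with the identity $(1,z)$. The extra remark about $f_1=\pm 1$ is a nice observation but, as you note, not needed.
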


\begin{proof} 
\begin{align*} (g(z), F(z)) \hspace{0.02in} \text{is an involution}  &\iff (g(z), F(z)) * (g(z), F(z)) = (1,z) \\
& \iff  ( g(z) \cdot g(F(z)), F(F(z)) ) = (1,z). \end{align*}
\end{proof}
 
 Letting $F(z) = -f(z)$, we get the following corollary.
 \begin{corollary} A Riordan matrix $(g(z), -f(z))$ is a pseudo-involution if and only if \[g(z) \cdot g(-f(z)) = 1\] and \[-f(-f(z)) = z. \] \end{corollary}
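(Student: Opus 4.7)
The plan is to deduce the corollary as an immediate specialization of Proposition 8 via the substitution $F(z) = -f(z)$ indicated in the lead-in sentence. Proposition 8 is a biconditional characterizing when a Riordan matrix $(g(z), F(z))$ is an involution: the two required identities are $g(z) \cdot g(F(z)) = 1$ and $F(F(z)) = z$. Its proof is already given above as a one-line application of the Riordan multiplication rule from Theorem 5.

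To prove the corollary I would simply substitute $F(z) = -f(z)$ into those two identities. The first identity $g(z) \cdot g(F(z)) = 1$ becomes $g(z) \cdot g(-f(z)) = 1$ with no further manipulation. The second identity $F(F(z)) = z$ becomes $(-f)\bigl((-f)(z)\bigr) = z$, and since substitution of a power series into the series $-f$ produces the negation of the substitution into $f$, this is the same as $-f(-f(z)) = z$. Both directions of the biconditional are inherited from Proposition 8 without additional work, so the argument amounts to renaming $F$ and simplifying a sign.

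I do not anticipate any substantive obstacle in this proof, since the whole argument is symbolic substitution followed by an appeal to Proposition 8. The only care needed is interpreting the notation $(-f)((-f)(z))$, but this is unambiguous in the formal power-series ring $\mathbb{C}[[z]]$. What makes the corollary useful rather than vacuous is the interpretation of the second identity: it says that the additive inverse $-f$ has compositional order $2$, which is precisely the hypothesis under which the construction theorem in the next section produces pseudo-involutions of the form $(g(z), -f(z))$ by solving the first identity $g(z) \cdot g(-f(z)) = 1$ for a suitable $g(z)$.
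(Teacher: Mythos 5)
Your substitution of $F(z) = -f(z)$ into the involution criterion (Proposition 11) is exactly the step the paper intends---the corollary is stated without proof, preceded only by the sentence ``Letting $F(z) = -f(z)$, we get the following corollary''---and your sign manipulation $(-f)\bigl((-f)(z)\bigr) = -f(-f(z))$ is fine. But there is a genuine gap: what that substitution delivers is the statement ``$(g(z), -f(z))$ is an \emph{involution} if and only if $g(z)\cdot g(-f(z)) = 1$ and $-f(-f(z)) = z$,'' whereas the corollary's conclusion is about a \emph{pseudo-involution}, and these are different properties (the Pascal matrix is a pseudo-involution but not an involution). Your argument never crosses that bridge. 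The missing step is Definition 8: $L$ has pseudo-order $2$ iff $LM$ has order $2$, where $M = (1,-z)$. Computing $(g(z), f(z)) \ast (1, -z) = (g(z), -f(z))$ via Theorem 4 shows that $(g(z), f(z))$ is a pseudo-involution iff $(g(z), -f(z))$ is an involution, and only then does Proposition 11 with $F = -f$ yield the two displayed identities.

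Carrying this out also exposes that the corollary as printed attaches the pseudo-involution property to the wrong matrix: the conditions $g(z)\cdot g(-f(z)) = 1$ and $-f(-f(z)) = z$ characterize $(g(z), f(z))$ being a pseudo-involution (equivalently, $(g(z), -f(z))$ being an involution); for $(g(z), -f(z))$ itself to be a pseudo-involution one would instead need $g(z)\cdot g(f(z)) = 1$ and $f(f(z)) = z$. The paper's own later use of the result, in the proof of Proposition 24 (``the Riordan matrix $(g(z), f(z))$ is a pseudo-involution $\iff (g(z), F(z))$ is an involution''), confirms that the intended subject is $(g(z), f(z))$. Your closing remark, which describes the construction as producing pseudo-involutions ``of the form $(g(z), -f(z))$,'' inherits the same confusion: the constructed pseudo-involutions are the matrices $(g(z), f(z))$, and $(g(z), -f(z))$ is the associated involution. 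A complete proof should state the bridge explicitly rather than treating the corollary as a pure renaming of Proposition 11.
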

 
 \begin{proposition} A Riordan matrix denoted by $A$ is a pseudo-involution if and only if $A^{-1}$ is a pseudo-involution.\end{proposition}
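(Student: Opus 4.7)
The plan is to use the defining characterization directly: $A$ is a pseudo-involution iff $(AM)^{2}=I$ (equivalently $(MA)^{2}=I$), where $M=(1,-z)$, and then exploit the fact that $M$ is itself an involution.

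First I would record the elementary observation that $M \ast M = (1,-z)\ast(1,-z) = (1,-(-z)) = (1,z) = I$, so $M$ has order $2$ in the Riordan group and $M^{-1}=M$. This will let me move $M$ across inverses without any cost.

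Next, assume $A$ is a pseudo-involution. Then $(AM)^{2}=I$, which is a statement in the Riordan group. Taking the group-theoretic inverse of both sides yields $\bigl((AM)^{-1}\bigr)^{2}=I$, and since $(AM)^{-1}=M^{-1}A^{-1}=MA^{-1}$, this becomes $(MA^{-1})^{2}=I$. By the equivalent form of the pseudo-involution condition (namely that $ML$ has order $2$), this is exactly the statement that $A^{-1}$ is a pseudo-involution. The converse follows from applying the same argument to $A^{-1}$ in place of $A$, using $(A^{-1})^{-1}=A$; alternatively, the whole argument is manifestly symmetric in $A$ and $A^{-1}$ so the ``iff'' is immediate.

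There is no real obstacle; the only thing one needs to notice is that $M$ is self-inverse, after which the proof is a one-line manipulation in the Riordan group. One could also give a slicker presentation by invoking the earlier proposition $A^{-1}=MAM$ to rewrite $A^{-1}M = MAM\cdot M = MA$, and then note that $(MA)^{2}=I$ by hypothesis, but the inverse-of-both-sides argument above is the cleanest and makes the ``iff'' completely transparent.
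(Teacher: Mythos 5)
Your proof is correct. The paper states this proposition without giving any proof, so there is nothing to compare against; your argument --- observe that $M=(1,-z)$ is self-inverse, take the group-theoretic inverse of the identity $(AM)^2=I$ to obtain $(MA^{-1})^2=I$, and invoke the equivalence of the $LM$ and $ML$ forms of the definition --- is exactly the natural one-line verification, and the symmetry remark correctly disposes of the converse.
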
 

We conclude this section by presenting some simple facts about certain subgroups of the Riordan group and pseudo-involutions. 

\begin{proposition} An element of the Appell subgroup, $(g(z), z)$, is a pseudo-involution if and only if $g(z)\cdot g(-z) = 1$. \end{proposition}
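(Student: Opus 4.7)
The plan is to reduce the pseudo-involution condition for an Appell element to the involution characterization given in the preceding proposition. By the definition of pseudo-involution, $L=(g(z),z)$ is a pseudo-involution if and only if $LM$ has order $2$, where $M=(1,-z)$. So the first step is to compute $LM$ explicitly using the Riordan group multiplication formula:
\[
LM=(g(z),z)\ast(1,-z)=(g(z)\cdot 1,\,-z)=(g(z),-z).
\]
This simplification is immediate because the second coordinate of the Appell element is $z$, the identity under composition, so composing with $-z$ just yields $-z$.

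The second step is to apply the earlier involution characterization to $(g(z),-z)$, taking $F(z)=-z$. The characterization says this matrix is an involution if and only if $g(z)\cdot g(F(z))=1$ and $F(F(z))=z$. Plugging in $F(z)=-z$, the second condition reduces to $-(-z)=z$, which holds automatically. Therefore the only remaining condition is $g(z)\cdot g(-z)=1$. Chaining the two equivalences establishes the biconditional in both directions at once.

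There is no real obstacle here: the proof is a direct specialization of earlier results, aided by the simplifying feature that for an Appell element the composition portion of the group law is trivial, and $-z$ is trivially of compositional order $2$. An alternative route would be to invoke the identity $A^{-1}=MAM$ for pseudo-involutions, compute $MAM=(g(-z),z)$ via two multiplications, and compare with $A^{-1}=(1/g(z),z)$ obtained from the Appell inverse formula; this yields the same condition $g(z)g(-z)=1$ at the cost of one additional multiplication.
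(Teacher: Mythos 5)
Your proof is correct and takes essentially the same route as the paper's: compute $LM=(g(z),z)\ast(1,-z)=(g(z),-z)$ and observe that its square is $(g(z)\cdot g(-z),z)$, which equals $(1,z)$ exactly when $g(z)\cdot g(-z)=1$. The only cosmetic difference is that you invoke the earlier involution characterization with $F(z)=-z$ where the paper carries out the squaring directly.
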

\begin{proof}
\begin{align*}
\big ( (g(z), z) * (1, -z) \big )^2 = (1, z) & \iff (g(z), -z)^2 = (1, z) \\
& \iff (g(z) \cdot g(-z), z) = (1, z) \\
& \iff g(z) \cdot g(-z) = 1. 
\end{align*}
\end{proof}

\begin{example} Let $g(z) = \tfrac{1+kz}{1-kz}$, where $k$ is any real number. Then,  $g(-z) = \tfrac{1-kz}{1+kz}$ and so $\tfrac{1+kz}{1-kz} \cdot \tfrac{1-kz}{1+kz} = 1.$ Hence, $\left(\tfrac{1+kz}{1-kz}, z \right )$ is a pseudo-involution in the Appell subgroup for any real number $k$.
\end{example}

\begin{proposition}
An element of the Bell subgroup, $\left (\tfrac{f(z)}{z}, f(z) \right )$, is a pseudo-involution iff $-f(z)$ has compositional order $2$. 
\end{proposition}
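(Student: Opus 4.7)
My plan is to verify the pseudo-involution property directly from its definition: $L = (f(z)/z,\,f(z))$ is a pseudo-involution iff $(LM)^{2} = (1, z)$, where $M = (1, -z)$. This lets me work in a single self-contained computation using only the Riordan multiplication rule, rather than routing through the preceding corollary.

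First, I would apply the Riordan multiplication rule to compute $LM$. Because the second coordinate of $M$ is $-z$, the product simply negates the second coordinate of $L$, giving $LM = (f(z)/z,\,-f(z))$. Then I would square this element with the same rule. The second coordinate of $(LM)^{2}$ is $(-f)\circ(-f) = -f(-f(z))$, and the first coordinate is $(f(z)/z)\cdot(f(-f(z))/(-f(z)))$, which telescopes after a single cancellation to $-f(-f(z))/z$.

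The key observation is that both coordinates of $(LM)^{2}$ end up involving the same quantity $-f(-f(z))$, a collapse produced entirely by the Bell-subgroup ratio structure $g(z) = f(z)/z$. Consequently, the condition $(LM)^{2} = (1, z)$ reduces to the single equation $-f(-f(z)) = z$, which is exactly the statement that $-f(z)$ has compositional order $2$; both directions of the ``iff'' follow from this single equivalence.

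I do not anticipate any real obstacle; the argument is a short direct calculation. The only minor detail worth flagging is the cancellation in the first coordinate: we need to know that $(f(z)/z)\cdot(f(-f(z))/(-f(z)))$ genuinely simplifies to $-f(-f(z))/z$ inside $\mathbb{C}[[z]]$, which is legitimate because $f$ has no constant term and $f_{1}\neq 0$, so $f(z)/z$ is a unit in $\mathbb{C}[[z]]$, and similarly $f(-f(z))/(-f(z))$ is a well-defined power series; clearing $z$ in $-f(-f(z))/z = 1$ is then valid since $-f(-f(z))$ lies in $z\mathbb{C}[[z]]$.
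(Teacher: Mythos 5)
Your computation is correct and complete: $LM = (f(z)/z,\,-f(z))$, and squaring gives $(LM)^2 = \left(\tfrac{-f(-f(z))}{z},\, -f(-f(z))\right)$, so both coordinates of the condition $(LM)^2=(1,z)$ collapse to the single equation $-f(-f(z))=z$, exactly as you say. The paper states this proposition without proof, but your argument is the same direct verification it uses for the analogous Appell-subgroup case (Proposition 13), and your remarks about why $f(z)/z$ and $f(-f(z))/(-f(z))$ are legitimate power series are the right details to flag.
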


\begin{proposition}
An element of the Associated subgroup, $(1, f(z))$, is a pseudo-involution iff $-f(z)$ has compositional order $2$.
\end{proposition}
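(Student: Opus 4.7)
The plan is to apply the Corollary directly in the case $g(z)=1$, supplemented by the definition of pseudo-involution, and simply read off what remains.

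First I would unpack the definition: $(1,f(z))$ is a pseudo-involution precisely when $\bigl((1,f(z))*(1,-z)\bigr)^{2}=(1,z)$. Using the Riordan product formula from the earlier theorem, the inner product collapses to $(1,-f(z))$, because the $g$-component is just $1\cdot 1=1$ and the $f$-component is $(-z)$ evaluated at $f(z)$, namely $-f(z)$. So the condition becomes $(1,-f(z))^{2}=(1,z)$.

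Next I would square $(1,-f(z))$ using the same product formula: the result is $(1,-f(-f(z)))$. Setting this equal to $(1,z)$ forces $-f(-f(z))=z$, which is exactly the statement that the power series $-f(z)$ has compositional order $2$. This gives the forward implication. For the converse, if $-f(z)$ has compositional order $2$, then $-f(-f(z))=z$, and tracing the computation backwards shows $\bigl((1,f(z))*(1,-z)\bigr)^{2}=(1,z)$, so $(1,f(z))$ is a pseudo-involution. (Equivalently, one can simply invoke the Corollary with $g(z)\equiv 1$: the hypothesis $g(z)\cdot g(-f(z))=1$ is automatic, so only the compositional-order condition remains.)

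The only place requiring any care is the meaning of \emph{compositional order $2$}: one should note that the convention includes $-f(z)\not\equiv z$, so that the element has order exactly $2$ rather than $1$. In the Associated subgroup, the identity is $(1,z)$, which corresponds to $f(z)=z$ and hence $-f(z)=-z\neq z$; so the non-degeneracy is automatic whenever $f$ is chosen to give a non-identity Riordan matrix. Apart from that minor bookkeeping point, the proof is a one-line consequence of the Corollary, and I do not anticipate any real obstacle.
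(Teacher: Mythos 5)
Your computation is correct and is exactly the argument the paper intends: the paper states this proposition without proof, but your calculation $(1,f(z))*(1,-z)=(1,-f(z))$ followed by $(1,-f(z))^2=(1,-f(-f(z)))$ is the same one-line verification the paper carries out explicitly for the analogous Appell-subgroup case (Proposition 14), and it is equivalent to invoking the Corollary with $g(z)\equiv 1$. One tiny bookkeeping correction: the degenerate case to exclude is $(1,f(z))*(1,-z)=(1,z)$, i.e., $f(z)=-z$ (where $-f(z)=z$ has compositional order $1$), not $f(z)=z$; this does not affect the equivalence, since in that case neither side of the ``iff'' holds.
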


\begin{proposition}
An element of the Derivative subgroup, $(f'(z), f(z))$, is a pseudo-involution iff $-f(z)$ has compositional order $2$. 
\end{proposition}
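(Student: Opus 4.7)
The plan is to invoke the Corollary with $g(z) = f'(z)$, reducing the pseudo-involution condition to two equations, and then to observe that the first of these follows automatically from the second by differentiation. Specifically, by the Corollary, $(f'(z), f(z))$ is a pseudo-involution if and only if
\[
f'(z) \cdot f'(-f(z)) = 1 \quad \text{and} \quad -f(-f(z)) = z.
\]
The second equation is exactly the statement that $-f$ has compositional order $2$, so the forward implication is immediate: if $(f'(z),f(z))$ is a pseudo-involution, then both conditions hold, and in particular $-f(-f(z)) = z$.

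For the converse, assume $-f(-f(z)) = z$. Differentiating both sides with respect to $z$ and applying the chain rule gives
\[
-f'(-f(z)) \cdot (-f'(z)) = 1,
\]
which simplifies to $f'(z) \cdot f'(-f(z)) = 1$. Both conditions of the Corollary are then satisfied, and so $(f'(z), f(z))$ is a pseudo-involution.

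The key observation, and what makes this proposition parallel the analogous statements for the Bell and Associated subgroups, is that the ``$g$ part'' $f'(z)$ of a Derivative-subgroup element is already determined by the ``$f$ part.'' Consequently, the multiplicative condition $g(z)\cdot g(-f(z)) = 1$ is not an independent requirement but a direct consequence of the compositional condition, delivered by a single application of the chain rule. There is no real obstacle here: the Corollary does all of the conceptual work, and the proof reduces to a one-line differentiation.
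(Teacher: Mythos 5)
Your proof is correct. The paper states this proposition (like the companion statements for the Bell, Associated, and Hitting-time subgroups) without any proof, so there is nothing to compare against; your argument via the Corollary, with the multiplicative condition $f'(z)\cdot f'(-f(z))=1$ obtained by formally differentiating $-f(-f(z))=z$ with the chain rule, is exactly the natural way to fill that gap, and the key observation that the $g$-part is determined by the $f$-part is the right one. The only quibble is one you inherit from the paper's own phrasing: ``compositional order $2$'' should really be ``order dividing $2$,'' since in the degenerate case $f(z)=-z$ the pair $(f'(z),f(z))=(-1,-z)$ is a pseudo-involution while $-f(z)=z$ has order $1$; this affects the paper's Propositions on the other subgroups equally and does not reflect on your argument.
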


\begin{proposition}
An element of the Hitting time subgroup, $(\tfrac{f'(z)}{f(z)}, f(z))$,  is a pseudo-involution iff $-f(z)$ has compositional order $2$. 
\end{proposition}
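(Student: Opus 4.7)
The plan is to apply the earlier Corollary to the matrix $(g(z), f(z))$ with $g(z) = z f'(z)/f(z)$ (taking the Hitting time subgroup in its standard form, with the factor of $z$ in the numerator so that $g(z)$ is actually a power series). By that Corollary, $(g(z), f(z)) = (g(z), -(-f(z)))$ is a pseudo-involution exactly when the two conditions $g(z) \cdot g(-f(z)) = 1$ and $-f(-f(z)) = z$ both hold. The second of these is verbatim the statement that $-f$ has compositional order $2$, so the ``only if'' direction is immediate; the substance of the proposition is to show that, under the hypothesis $-f(-f(z)) = z$, the first condition comes for free.

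For the ``if'' direction, I would first rewrite the hypothesis as $f(-f(z)) = -z$ and differentiate both sides with respect to $z$. The chain rule gives $f'(-f(z)) \cdot (-f'(z)) = -1$, i.e.\ $f'(-f(z)) = 1/f'(z)$. Then I would expand $g(z) \cdot g(-f(z)) = \tfrac{z f'(z)}{f(z)} \cdot \tfrac{(-f(z)) f'(-f(z))}{f(-f(z))}$ and substitute both $f'(-f(z)) = 1/f'(z)$ and $f(-f(z)) = -z$; the two minus signs cancel, the factors $f(z)$ and $f'(z)$ cancel, and the leftover $z$ in the numerator cancels the $z$ in the denominator, leaving $1$.

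The only obstacle I foresee is the usual bookkeeping needed to justify the formal power series manipulations: $g(z)$ is a legitimate power series only because of the $z$ in $z f'(z)/f(z)$ (since $f(z)$ has no constant term, $f'(z)/f(z)$ alone would have a pole), and the composition $f'(-f(z))$ is well-defined precisely because $-f$ has zero constant term. Structurally this proof is parallel to the ones for the Bell, Associated, and Derivative subgroups stated just above: in each case the characterization is the same, and in each case the derivative identity $f'(z) \cdot f'(-f(z)) = 1$ obtained from $f(-f(z)) = -z$ is the single engine that produces the cancellation needed in the Corollary's first condition; only the explicit form of $g$ (and hence the factors that cancel) changes.
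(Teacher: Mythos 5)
Your proof is correct. The paper in fact states this proposition (along with the companion statements for the Bell, Associated, and Derivative subgroups) with no proof at all, so there is no argument of the authors' to compare against; your write-up supplies exactly the missing verification, and it is the natural one. Two details you handle well: you correctly read the hitting-time element as $\left(\tfrac{zf'(z)}{f(z)}, f(z)\right)$, since the paper's displayed form $\left(\tfrac{f'(z)}{f(z)}, f(z)\right)$ omits the factor of $z$ needed to make the first component a genuine power series with nonzero constant term; and the derivative identity $f'(-f(z))\, f'(z) = 1$, obtained by formally differentiating $f(-f(z)) = -z$, is precisely the engine that makes $g(z)\,g(-f(z))$ collapse to $1$ after substituting $f(-f(z)) = -z$, while the ``only if'' direction falls out of the Corollary verbatim. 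The only caveat worth half a sentence is the usual one, which the paper glosses over throughout: ``compositional order $2$'' should really be ``order dividing $2$'' to accommodate the degenerate case $f(z) = -z$.
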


\section{Lucas Riordan Array}

\subsection{Stochastic Lucas Array}

In this section, we create a stochastic Lucas array based on the modified Lucas numbers and then generate a stochastic Riordan matrix. 
\begin{lemma} \cite{jean} If a Riordan array is stochastic, then $f(z) = -g(z) +zg(z) +1.$ \end{lemma}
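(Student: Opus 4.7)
The plan is to apply the Fundamental Theorem of the Riordan Group to the all-ones column vector, since the row sums of any matrix are precisely the entries of the matrix times the all-ones column. The generating function for the sequence $(1,1,1,\ldots)$ is $h(z) = \tfrac{1}{1-z}$, so by the Fundamental Theorem the generating function for the row sums of $(g(z), f(z))$ is
\[
(g(z), f(z)) \otimes \tfrac{1}{1-z} \;=\; g(z) \cdot \tfrac{1}{1 - f(z)}.
\]

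Next, I would use the stochastic hypothesis: since every row sums to $1$, the row-sum generating function must equal $\tfrac{1}{1-z}$. This gives the single scalar equation
\[
\frac{g(z)}{1 - f(z)} \;=\; \frac{1}{1-z},
\]
which I would then simply solve algebraically for $f(z)$: cross-multiplying yields $g(z)(1-z) = 1 - f(z)$, and rearranging gives $f(z) = 1 - g(z) + zg(z) = -g(z) + zg(z) + 1$, which is the claimed identity.

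There is essentially no main obstacle here; the only thing one might want to double-check is the legitimacy of the algebraic manipulation in the ring of formal power series $\mathbb{C}[[z]]$. Specifically, one needs $1 - f(z)$ to be invertible as a formal power series, which holds because $f(z) = f_1 z + f_2 z^2 + \cdots$ has zero constant term, and likewise $1-z$ is invertible. So the steps of cross-multiplication and rearrangement are fully justified, and the proof is complete in a few lines.
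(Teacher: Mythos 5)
Your proof is correct: the paper cites this lemma from Jean-Louis and Nkwanta without reproducing a proof, and your argument—applying the Fundamental Theorem to the all-ones column to get the row-sum generating function $g(z)/(1-f(z))$, equating it to $1/(1-z)$, and solving—is exactly the standard derivation, with the invertibility of $1-f(z)$ in $\mathbb{C}[[z]]$ correctly justified. As a sanity check, your formula reproduces the paper's stochastic Lucas array: for $g(z)=\tfrac{1+z^2}{1-z-z^2}$ it yields $f(z)=\tfrac{-2z^2+z^3}{1-z-z^2}$.
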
 
Using this lemma, we can generate a stochastic array for any sequence of counting numbers.

\begin{example} Using the generating function of the modified Lucas numbers as $g(z)$, we can generate the stochastic Lucas array. Since $g(z) = \tfrac{1+z^2}{1-z-z^2}$, by Lemma 20 \[f(z) = \tfrac{-2z^2 + z^3}{1-z-z^2}.\] Thus, the stochastic Lucas array is \[(g(z), f(z)) = \left (\frac{1+z^2}{1-z-z^2},\frac{-2z^2+z^3}{1-z-z^2} \right ) \] where the first few entries of the array are shown below:
\[\begin{bmatrix}1\\1&\phantom{-}0\\3&-2&0\\4&-3&0&\phantom{-}0\\7&-10&4&\phantom{-}0&0\\11&-18&8&\phantom{-}0&0&0&\\18&-38&29&-8&0&0&0\\29&-71&63&-20&0&0&0&0\\47&-134&150&-78&16&0&0&0&0\\76&-245&317&-195&48&0&0&0&0&0 \end{bmatrix}.\]
\end{example}
 
This is an example of a vertically stretched Riordan array. After removing the leading 1 from the generating function of the modified Lucas numbers by subtracting 1 and dividing by $z$, and applying Lemma 20, we can generate a stochastic Lucas matrix $\left (\frac{1+2z}{1-z-z^2},\frac{-2z+z^2}{1-z-z^2} \right )$. The first few entries are shown below:
 
 \[ \begin{bmatrix}1\\3&-2\\4&-7&4\\7&-14&16&-8\\11&-31&41&-36&16\\18&-60&105&-110&80&-32&\\29&-116&235&-315&280&-176&64\\47&-216&512&-790&880&-688&384&-128\\76&-397&1063&-1894&2425&-2344&1648&-832&256\\123&-718&2153&-4298&6303&-7002&6032&-3872&1792&-512 \end{bmatrix}.\]

The Z sequence of the stochastic Lucas matrix is infinite. The first few terms are:
\[3, \tfrac{5}{2}, \tfrac{25}{8}, \tfrac{25}{8}, \tfrac{375}{128}, \tfrac{375}{128}, \tfrac{3125}{1024}, \tfrac{3125}{1024}, ... \]

The A sequence is also infinite. The first few terms are:
\[-2, \tfrac{1}{2}, -\tfrac{5}{8}, 0, \tfrac{25}{128}, 0, -\tfrac{125}{1024}, 0, ... \]

It is easy to confirm the above new array and matrix are stochastic. \\ See \cite{jean} for an example of a stochastic Fibonacci array.

\section{Construction of a Lucas Pseudo-involution}

The following theorem is used to construct a Lucas pseudo-involution. 

\begin{theorem}\cite{marshall} If $g(z) = g_0+g_1z+g_2z^2+..., $ with $g_0 = 1$ and $g_1 \neq 0$, then there exists a unique $f(z)$ such that $(g(z), f(z))$ is a pseudo-involution. In fact, 

\[f(z) = -\bar{G} \big ( \tfrac {-G(z)}{g(z)} \big ) \]

\noindent where $G(z) = g(z) - g_0$. \end{theorem}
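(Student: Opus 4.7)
The plan is to unpack the pseudo-involution condition into two functional equations, solve the first for $f$ to obtain existence, uniqueness, and the stated formula, and then verify that the second equation is automatically satisfied. By the definition of pseudo-involution, $(g(z), f(z))$ is a pseudo-involution iff $\bigl((g(z), f(z))(1,-z)\bigr)^2 = (1,z)$, i.e., $(g(z), -f(z))^2 = (1,z)$. Multiplying in the Riordan group yields the system
$$g(z)\, g(-f(z)) = 1, \qquad -f(-f(z)) = z.$$
Setting $h(z) := -f(z)$, these conditions become $g(z)\, g(h(z)) = 1$ and $h(h(z)) = z$.

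Next, I would solve the first equation for $h$. Writing $G(z) = g(z) - 1$, so that $G(0) = 0$ and $G'(0) = g_1 \neq 0$, the identity $g(z)\, g(h(z)) = 1$ becomes $1 + G(h(z)) = 1/g(z)$, i.e.,
$$G(h(z)) = \frac{-G(z)}{g(z)}.$$
Because $g_1 \neq 0$, $G$ admits a compositional inverse $\bar{G}$, and the right-hand side vanishes at $z=0$, so applying $\bar{G}$ to both sides gives
$$h(z) = \bar{G}\!\left(\frac{-G(z)}{g(z)}\right), \qquad f(z) = -\bar{G}\!\left(\frac{-G(z)}{g(z)}\right).$$
This derivation is reversible, establishing both uniqueness and the claimed formula. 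A quick leading-term check gives $-G(z)/g(z) = -g_1 z + O(z^2)$ and $\bar{G}(w) = w/g_1 + O(w^2)$, so $f(z) = z + O(z^2)$, confirming that $f$ is a legitimate second component of a Riordan matrix.

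Finally, I would verify that this $h$ automatically satisfies $h(h(z)) = z$. Using $g = 1 + G$, we compute
$$g(h(z)) = 1 + G(h(z)) = 1 - \frac{G(z)}{g(z)} = \frac{g(z) - G(z)}{g(z)} = \frac{1}{g(z)},$$
so
$$\frac{-G(h(z))}{g(h(z))} = \frac{G(z)/g(z)}{1/g(z)} = G(z),$$
and therefore $h(h(z)) = \bar{G}(G(z)) = z$. The main obstacle — really the only piece of substance in the theorem — is recognizing that the first functional equation is strong enough to force the second on its own; once $h$ has been extracted from $g(z)\, g(h(z)) = 1$, the symmetric identity $g \cdot (g \circ h) = 1$ makes the involutive property $h \circ h = \mathrm{id}$ fall out of a short calculation, with no further constraints on $g$.
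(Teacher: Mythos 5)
Your proposal is correct. Note that the paper itself gives no proof of this theorem---it is imported from the cited reference \cite{marshall}---so there is no in-paper argument to compare against; your derivation stands as a complete, self-contained justification. The two halves of your argument are both sound: solving $g(z)\,g(h(z))=1$ for $h=-f$ via $G(h(z))=-G(z)/g(z)$ uses exactly the hypotheses $g_0=1$ (so that $G(0)=0$ and $g-G=1$) and $g_1\neq 0$ (so that $\bar G$ exists and $f_1=1\neq 0$), and it yields uniqueness because any pseudo-involution must satisfy that first equation. The only substantive point---that the candidate $h$ automatically satisfies $h(h(z))=z$---is handled correctly by the computation $g(h(z))=1/g(z)$, which shows the defining expression for $h$ evaluated at $h(z)$ collapses to $\bar G(G(z))=z$.
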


\subsection{Lucas Pseudo-involution}

The next array is another pseudo-involution in the Riordan group. The modified Lucas numbers make up the first column of the array. 
  
\begin{example}
We use the generating function of the modified Lucas numbers and Theorem 22 to generate another pseudo-involution. Let $g(z) = \tfrac{1+z^2}{1-z-z^2}$. Then, 
\[G(z) = \tfrac{z+2z^2}{1-z-z^2}\] 
and 
\[ \bar{G}(z) = \tfrac{-(1+z) + \sqrt{5z^2+10z+1}}{2(2+z)} \]
Now, 
\[ \tfrac{1}{g(z)} -1 = \tfrac{-z-2z^2}{1+z^2} \]
Hence, 
\[ F(z) = \bar{G}(\tfrac{-z-2z^2}{1+z^2}) = \tfrac{-1+z+z^2+\sqrt{z^4+10z^3-13z^2-10z+1}}{4-2z}.  \]

So, \[  (g(z), f(z)) = \Big ( \tfrac{1+z^2}{1-z-z^2}, \tfrac{1-z-z^2-\sqrt{z^4+10z^3-13z^2-10z+1}}{4-2z} \Big ) \] is a pseudo-involution of the Riordan group with the modified Lucas numbers in the first column. The first few entries of this Riordan matrix are shown below:

$$
\begin{bmatrix}
1 \\
1&1 \\
3&6&1 \\
4&33&11&1\\
7&214&88&16&1\\
11&1572&699&168&21&1\\
18&12686&5787&1584&273&26&1\\
29&108583&50036&14652&2994&403&31&1\\
47&967294&447998&136436&30792&5054&558&36&1
\end{bmatrix}.
$$
\vspace{0.1in}

\end{example}
 
 This can be confirmed by direct calculations.
 The Lucas pseudo-involution also has infinite A and Z sequences. Its Z sequence is \[1,2,-11,58,-384, 2872, -23416, 201608, ... \] and its A sequence is \[1,5,0,45,-225, 1980, -16200, 142920, ...\]
 
 \vspace{0.1in}
 
\subsection{Convolved Fibonacci Pseudo-involution}

The generating function of the convolved Fibonacci numbers is $\left ( \tfrac{1}{1-z-z^2} \right)^n.$
Using this as $g(z)$, following Proposition 24 below, we can construct a convolved Fibonacci pseudo-involution.

\begin{proposition} If $(g(z), f(z))$ is a pseudo-involution, then $(g^n(z), f(z))$ is a pseudo-involution. \end{proposition}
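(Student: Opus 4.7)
The plan is to apply Corollary 16 in both directions. From the hypothesis that $(g(z), f(z))$ is a pseudo-involution, Corollary 16 would yield the two identities
\begin{equation*}
g(z) \cdot g(-f(z)) = 1, \qquad -f(-f(z)) = z.
\end{equation*}
To establish the conclusion that $(g^n(z), f(z))$ is a pseudo-involution, I would then verify these same two identities with $g$ replaced by $g^n$ and $f$ left unchanged, again by appealing to Corollary 16.

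The compositional identity $-f(-f(z)) = z$ involves only $f$, so it transfers verbatim from the hypothesis. For the multiplicative identity, I would use the multiplicativity of taking powers in the ring $\mathbb{C}[[z]]$: substituting $-f(z)$ into $g^n$ is the same as raising $g(-f(z))$ to the $n$-th power, giving
\begin{equation*}
g^n(z) \cdot g^n(-f(z)) = \bigl( g(z) \cdot g(-f(z)) \bigr)^n = 1^n = 1.
\end{equation*}
Both conditions of Corollary 16 are then satisfied for $(g^n(z), f(z))$, completing the argument.

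There is essentially no obstacle; the result is purely algebraic and rests entirely on the identities furnished by Corollary 16 together with the fact that $(uv)^n = u^n v^n$ in $\mathbb{C}[[z]]$. The only side check I would include is that $(g^n(z), f(z))$ is a genuine Riordan matrix in the sense of the opening definition, which is immediate: $g^n(z)$ has constant term $g(0)^n \neq 0$ since $g(0) \neq 0$, and $f(z)$ retains the required form $f_1 z + f_2 z^2 + \cdots$ with $f_1 \neq 0$ by hypothesis.
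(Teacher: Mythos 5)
Your proof is correct and follows essentially the same route as the paper: both arguments observe that the compositional condition $-f(-f(z))=z$ is unaffected by changing $g$, and reduce the multiplicative condition to $g^n(z)\cdot g^n(-f(z)) = \bigl(g(z)\cdot g(-f(z))\bigr)^n = 1$. (The characterization you invoke is Corollary 12 in the paper's numbering, stated there via the equivalent formulation that $(g(z),-f(z))$ composed with itself is the identity, but the content is identical.)
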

\begin{proof}
Let $F(z) = -f(z)$. Notice that the Riordan matrix $(g(z), f(z))$ is a pseudo-involution $\iff (g(z), F(z))$ is an involution. This means that $g(z) \cdot g(F(z)) = 1$ and $F(F(z)) = z$. To show that $(g^n(z), F(z))$ is an involution, we need to show that $g^n(z) \cdot g^n(F(z)) = 1$ since we already have that $F(F(z)) = z.$
 Now, \[g^n(z) \cdot g^n(F(z)) = (g(z) \cdot g(F(z)))^n = 1^n = 1.\] 
\end{proof}

The above proposition confirms that    $\left ( (\tfrac{1}{1-z-z^2})^n, \tfrac{1-z-z^2- \sqrt{5z^4+10z^3-z^2-6z+1}}{2-2z-2z^2} \right )$ is a pseudo-involution. Recall the case for $n =1$ is a pseudo-involution given earlier. When $n=2$, we get \[ (g^2(z), f(z)) = \left ( \tfrac{1}{(1-z-z^2)^2}, \tfrac{1-z-z^2- \sqrt{5z^4+10z^3-z^2-6z+1}}{2-2z-2z^2} \right ).\]  The first few entries of the Riordan matrix are shown below, where the convolved Fibonacci numbers \seqnum{A001629} \cite{sloane} are given in the leftmost column:

$$
\begin{bmatrix}
1 \\
2&1 \\
5&5&1 \\
10&20&8&1\\
20&75&44&11&1\\
38&285&212&77&14&1\\
71&1138&976&448&119&17&1\\
130&4820&4476&2390&810&170&20&1\\
235&21545&20838&12266&4905&1325&230&23&1
\end{bmatrix}.
$$
\vspace{0.1in}

The Z-sequence of the convolved Fibonacci pseudo-involution is infinite and starts with \[2, 1, -5, 20, -77, 308, -1303, 5805,...\]
\hspace{0.1in} The A-sequence is also infinite and starts with \[1, 3, 0, 5, -15, 70, -310, 1455, ...\] Notice that this is the same A sequence as that of the Fibonacci pseudo-involution given in \cite{marshall1}.

The modified Lucas and convolved Fibonacci pseudo-involutions are new arrays and none of their columns, other than the first, are sequences found in OEIS. Their row sums, alternating row sums and diagonal sums are also not in OEIS.

\subsection{MATLAB Algorithm for Constructing Pseudo-involutions Starting with g(z)}

The following MATLAB algorithm is used to find the corresponding $f(z)$ that will make $(g(z), f(z))$ a pseudo-involution,  after entering a bi-invertible generating function $g(z)$. Note that a bi-invertible generating function $g(z)$ satisfies $g_0 \neq 0$ and $g_1 \neq 0$. 

\vspace{0.1in}
\noindent syms z \\ \\
g(z) = input('Enter g(z): '); 
$G(z)=g(z)-1 \\ \\
G_(z) = finverse(G)$  \\  \\
R=-G(z)/g(z)  \\  \\
$F(z) = subs(G_(z), z, R)$  \\   \\
$simplify(F(z))  \\   \\
f(z)=-subs(G_(z),z,R) \\  \\
simplify(f(z))$. \\

Construction of pseudo-involutions starting with bi-invertible generating functions $g(z)$ involving the Motzkin \seqnum{A001006} \cite{sloane} and Hex numbers \seqnum{A003215} \cite{sloane} can be found in \cite{marshall1}.

\section{Constructing Pseudo-Involutions Starting with f(z)}

Earlier we constructed pseudo-involutions starting with $g(z)$. We saw that if we were given $g(z)$ such that $g_0 = \pm 1$ and $g_1 \neq 0$, then we can find a unique $f(z)$ such that $(g(z), f(z))$ is a pseudo-involution. 
Now, we will construct pseudo-involutions starting with $f(z)$. Note that this time we are restricted because we need to start with an $f(z)$ such that $-f(-f(z)) = z$.

\begin{theorem}If $f(z)$ is a formal power series such that $-f(z)$ has compositional order $2$, then $\{g(z) \in F_0[[z]] \mid (g(z), f(z))$ is a pseudo-involution$\}$ is an infinite subgroup of $F_0[[z]]$. \end{theorem}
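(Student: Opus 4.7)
My plan is to translate the pseudo-involution hypothesis into a single multiplicative functional equation, verify the three subgroup axioms in one line each, and then exhibit an infinite family of solutions to prove infiniteness.

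By Corollary 12, the Riordan matrix $(g(z), f(z))$ is a pseudo-involution if and only if $g(z) \cdot g(-f(z)) = 1$ and $-f(-f(z)) = z$. The second condition is exactly the hypothesis that $-f$ has compositional order $2$, so the set in question reduces to
\[
S \;=\; \{\, g \in F_0[[z]] \,:\, g(z) \cdot g(-f(z)) = 1 \,\}.
\]
The key observation for the subgroup axioms is that the map $\Phi : g \mapsto g(z) \cdot g(-f(z))$ is a homomorphism from the multiplicative group $F_0[[z]]$ to itself. Hence closure, the presence of the identity, and closure under inverses drop out immediately from $\Phi(1) = 1$, $\Phi(gh) = \Phi(g)\Phi(h)$, and $\Phi(1/g) = 1/\Phi(g)$; the only tiny check is that $1/g \in F_0[[z]]$, which is automatic since $g$ is a unit.

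For infiniteness, I would use the following construction: for any $h \in F_0[[z]]$, set
\[
g_h(z) \;:=\; \frac{h(z)}{h(-f(z))}.
\]
This lies in $F_0[[z]]$ because $h(-f(z))$ is a unit (evaluated at $z = 0$ it equals $h(0) \neq 0$). Using $-f(-f(z)) = z$, a direct substitution gives $g_h(-f(z)) = h(-f(z))/h(z) = 1/g_h(z)$, so $g_h \in S$. Taking $h(z) = 1 + z$ yields $g_h(z) = (1+z)/(1 - f(z))$, which equals $1$ only if $f(z) = -z$; but then $-f(z) = z$ would have compositional order $1$ rather than $2$, contradicting the hypothesis. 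Thus $g_h$ is a non-trivial element of $S$ with $g_h(0) = 1$, and its integer powers $g_h^{k}$ are pairwise distinct because $\log g_h$ is a nonzero element of $z\,\mathbb{C}[[z]]$, a torsion-free abelian group under addition.

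I expect the infiniteness step to be the only real obstacle: once the functional equation $g(z) g(-f(z)) = 1$ has been isolated, the subgroup axioms are bookkeeping, but producing a non-trivial element of $S$ requires the $h(z)/h(-f(z))$ trick together with the short argument ruling out the degenerate case $f = -z$.
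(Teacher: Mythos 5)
Your proof is correct, and it takes a genuinely different route from the paper's. The paper verifies the two subgroup axioms by direct Riordan-matrix multiplication, computing $(g_1g_2,F)^2$ and $(g_1^{-1},F)^2$ with $F=-f$, and then handles infiniteness only by pointing to the four elements $1$, $f(z)/z$, $f'(z)$, $zf'(z)/f(z)$ supplied by Propositions 16--19 and remarking that ``these are just a few.'' You instead reduce everything to the functional equation $g(z)\,g(-f(z))=1$ via Corollary 12 and observe that the solution set is the kernel of the multiplicative homomorphism $\Phi(g)=g\cdot(g\circ(-f))$, which subsumes both of the paper's closure computations in one stroke. More importantly, your infiniteness argument is genuinely stronger than the paper's: the coboundary-type family $g_h=h/(h\circ(-f))$ together with the torsion-freeness of $1+z\mathbb{C}[[z]]$ gives a complete proof, and it correctly invokes the \emph{exact} order-$2$ hypothesis to exclude $f(z)=-z$ (where the set degenerates to $\{1,-1\}$ and the theorem would fail). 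Your argument even covers the edge case $f(z)=z$, where $-f$ does have exact order $2$ but all four of the paper's exhibited elements collapse to the constant $1$, so the paper's list alone does not establish infinitude there; your $g_h(z)=(1+z)/(1-z)$ and its distinct powers do. The trade-off is that the paper's approach stays entirely inside Riordan-group arithmetic and showcases the named subgroups, while yours isolates the underlying algebra and is the one that actually closes the infiniteness gap.
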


\begin{proof} By Propositions 16,17,18 and 19, $g(z) = f(z)/z,  g(z) = 1$,  $g(z) = f'(z)$  and,  \\ $g(z) = \tfrac{zf'(z)}{f(z)}$ all make $(g(z), f(z))$ a pseudo-involution. These are just a few of the $g(z)$'s that make $(g(z), f(z))$ a pseudo-involution. 

Let $F(z) = -f(z)$. We first show closure under multiplication by showing that if $(g_1(z), F(z))$ and $(g_2(z), F(z))$ are involutions, then $(g_1(z)g_2(z), F(z))$ is also an involution. 
Let $(g_1(z), F(z))$ and $(g_2(z), F(z))$ be involutions. Then, 
\begin{align*}
(g_1(z)g_2(z), F(z))(g_1(z)g_2(z), F(z)) & = (g_1(z)g_2(z) \cdot g_1(F(z)) \cdot g_2(F(z)), F(F(z))) \\
& = ( g_1(z)g_1(F(z)) \cdot g_2(z)g_2(F(z)), F(F(z))) \\
& = (1, z).
\end{align*}
Next, we show closure under taking of inverses by showing that $(g_1^{-1}(z), F(z))$ is also an involution. Thus,
\begin{align*}
(g_1^{-1}(z), F(z))(g_1^{-1}(z), F(z)) & = (g_1^{-1}(z) \cdot g_1^{-1}(F(z)),  F(F(z))) \\
& = \left ( \tfrac{1}{g_1(z)}\tfrac{1}{g_1(F(z))}, F(F(z)) \right ) \\
&= \left ( \tfrac{1}{g_1(z)g_1(F(z))}, F(F(z)) \right ) \\
&= (1, z).
\end{align*}
\end{proof}

\subsection{Construction of Fibonacci-type Pseudo-Involutions in the Riordan Group}

We will now use the $f(z)$ that made the generating function for the Fibonacci numbers and the generating function of the convolved Fibonacci numbers into pseudo-involutions. Recall below, $f(z)$ as given in Subsection 5.2.
\[f(z) = \tfrac{1-z-z^2- \sqrt{5z^4+10z^3-z^2-6z+1}}{2-2z-2z^2}.\]
We know that if we let $g(z) = 1, g(z) = f(z)/z$ or $g(z) = f'(z)$, then we have constructed pseudo-involutions. We start by illustrating the first few rows of $(1, f(z))$. 

$$
\begin{bmatrix}
1\\
0&1\\
0&3&1\\
0&9&6&1\\
0&32&27&9&1\\
0&126&118&54&12&1\\
0&538&525&285&90&15&1\\
0&2429&2408&1440&560&135&18&1\\
0&11412&11378&7203&3195&970&189&21&1\\
0&55201&55146&36162&17488&6195&1542&252&24&1
\end{bmatrix}
$$

\vspace{0.2in}

 Next, we construct a pseudo-involution in the Bell subgroup. Now,
\[f(z)/z = \tfrac{1-z-z^2- \sqrt{5z^4+10z^3-z^2-6z+1}}{2z-2z^2-2z^3}.\]
We illustrate the first few rows of $(f(z)/z, f(z))$.

$$
\begin{bmatrix}
1\\
3&1\\
9&6&1\\
32&27&9&1\\
126&118&54&12&1\\
538&525&285&90&15&1\\
2429&2408&1440&560&135&18&1\\
11412&11378&7203&3195&970&189&21&1\\
55201&55146&36162&17488&6195&1542&252&24&1\\
272993&272904&183132&93926&37043&10926&2303&324&27&1
\end{bmatrix}
$$

\vspace{0.2in}

Now, if 
\[f(z) = \tfrac{1-z-z^2- \sqrt{5z^4+10z^3-z^2-6z+1}}{2-2z-2z^2},\]
then, 
\[f'(z) = \tfrac{-2z-1}{(z^2+z-1)\sqrt{5z^4+10z^3-z^2-6z+1}}. \]

 Next, we illustrate the first few rows of $(f'(z), f(z))$, an element of the Derivative subgroup. 

$$
\begin{bmatrix}
1\\
6&1\\
27&9&1\\
128&54&12&1\\
630&295&90&15&1\\
3228&1575&570&135&18&1\\
17003&8428&3360&980&189&21&1\\
91296&45512&19208&6390&1552&252&24&1\\
496809&248157&108486&39348&11151&2313&324&27&1\\
2729930&1364520&610440&234815&74086&18210&3290&405&30&1
\end{bmatrix}
$$

 Finally, we illustrate the first few rows of $\left(\tfrac{zf'(z)}{f(z)}, f(z) \right )$, an element of the Hitting Time subgroup.

$$
\begin{bmatrix}
1\\
3&1\\
9&6&1\\
42&27&9&1\\
201&128&54&12&1\\
1043&630&295&90&15&1\\
5544&3228&1575&570&135&18&1\\
30012&17003&8428&3360&980&189&21&1\\
164281&91296&45512&19208&6390&1552&252&24&1\\
906693&496809&248157&108486&39348&11151&2313&324&27&1
\end{bmatrix}
$$

\hspace{0.2in} Thus far, we have six Riordan pseudo-involutions with the same $f(z)$. There are many more Riordan pseudo-involutions using this same $f(z)$ since we showed earlier that the $g(z)$'s that make $(g(z), f(z))$ a pseudo-involution, form a group under multiplication. We therefore know that we can multiply $g(z)$'s to get others and we can take powers of these $(g(z))$'s.

\section{Conclusion}
In this paper, we presented examples of Riordan arrays that involve the Fibonacci and modified Lucas numbers. There are also other papers on Riordan arrays that involve the Fibonacci numbers \cite{barry, nkwanta1, shap2}. 

Most of the Riordan matrices, as well as the Lucas stochastic array presented in this paper are new and contain many new sequences of integers. The arrays were observed while studying certain algebraic properties of the Riordan group. Finding combinatorial interpretations of the arrays was not the focus of this paper. Thus, the arrays are open for combinatorial interpretations. Constructing a pseudo-involution starting with a modified Lucas generating function $f(z)$ would also be of interest.

\bigskip
\hrule
\bigskip

\noindent {\it Keywords}: Fibonacci numbers,  Lucas numbers, Riordan group, Riordan matrix, Riordan array, pseudo-involution.

\bigskip
\hrule
\bigskip

\noindent (Concerned with sequences 
\seqnum{A000045},
\seqnum{A000204}, 
\seqnum{A130595},
\seqnum{A007318},
\seqnum{A000032},
\seqnum{A003215},
\seqnum{A001006}
and \seqnum{A001629}.)

\bigskip
\hrule

\end{document}